\newcommand{\PP}{\mathbb{P}}
\newcommand{\FFF}{\mathbb{F}}
\newcommand{\FF}{\mathcal{F}}
\newcommand{\UU}{\mathcal{U}}
\newcommand{\bP}{\mathbb{P}}
\newcommand{\gG}{\mathbb{G}}
\newcommand{\QQl}{\mathbb{Q}_\ell}
\DeclareMathOperator{\id}{id}
\newcommand{\codim}{\operatorname{codim}}
\newcommand{\image}{\operatorname{image}}
\newcommand{\CH}{\operatorname{CH}}
\newcommand{\cl}{\operatorname{cl}}
\newtheorem{theorem}{Theorem}[section]
\newtheorem{lemma}[theorem]{Lemma}
\newtheorem{proposition}[theorem]{Proposition}
\newtheorem{corollary}[theorem]{Corollary}
\newtheorem{definition}[theorem]{Definition}
\title{Rational curves on complete intersections in positive characteristic}
\author{Eric Riedl and Matthew Woolf}
\begin{document}
\maketitle

\begin{abstract}
We study properties of rational curves on complete intersections in positive characteristic. It has long been known that in characteristic 0, smooth Calabi-Yau and general type varieties are not uniruled. In positive characteristic, however, there are well-known counterexamples to this statement. We will show that nevertheless, a \emph{general} Calabi-Yau or general type complete intersection in projective space is not uniruled. We will also show that the space of complete intersections of degree $(d_1, \cdots, d_k)$ containing a rational curve has codimension at least $\sum_{i=1}^k d_i - 2n + 2$ in the moduli space of all complete intersections of given multidegree and dimension.
\end{abstract}

\section{Introduction}

In characteristic zero, there is to some extent a dichotomy in the behavior of rational curves on smooth complete intersections in projective space. If the complete intersection is Fano, then it is rationally connected, i.e., there is a rational curve connecting any two points. On the other hand, if the complete intersection is Calabi-Yau or of general type, then it is not even uniruled, i.e., there is no rational curve passing through a very general point.

In positive characteristic, the notions of rational connectedness and uniruledness become more complicated. While there are still notions of rational connectedness or uniruledness as above, we can alternatively require a variety to be \emph{separably} rationally connected or uniruled, which essentially means that there are rational curves on the variety which have many infinitesimal deformations.

It is still true that all Fano varieties are rationally connected, whereas Calabi-Yau and general type varieties are never separably uniruled. More recently, it has been shown that the general Fano complete intersection is even separably rationally connected \cite{CZ}. However, in positive characteristic there are general type varieties that are uniruled. Shioda constructed examples of smooth hypersurfaces of arbitrarily large degree which are unirational, so in particular, rationally connected and uniruled \cite{S}.  Liedtke shows that supersingular K3 surfaces are unirational \cite{L}, and also gives a construction of some families of uniruled surfaces of general type \cite{L2}.

In this paper, we will show that despite the existence of these pathological examples, we still have the following result.

\begin{theorem}
\label{nonuniruled}
Let $X$ be a general Calabi-Yau or general type complete intersection in projective space. Then $X$ is not uniruled.
\end{theorem}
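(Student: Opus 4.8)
The plan is to translate non-uniruledness into a dimension estimate on a universal incidence variety of rational curves and to carry out the estimate in the \emph{unparametrized} picture, where the $\mathrm{PGL}_2$ of reparametrizations supplies exactly the slack needed to win in the borderline Calabi--Yau case. Write $m=n-k=\dim X$ and $c=\sum_{i=1}^k d_i-(n+1)\ge 0$, so that $X$ is Calabi--Yau when $c=0$ and of general type when $c>0$. Let $\MM$ denote the parameter space of complete intersections of the given multidegree, let $\XX\to\MM$ be the universal complete intersection, and for each $e\ge 1$ set
\[
\II_e=\{(X,f): f\in\operatorname{Mor}_e(\PP^1,\PP^n),\ f(\PP^1)\subset X\},
\]
with projections $\pi\colon\II_e\to\MM$ and $\rho\colon\II_e\to\operatorname{Mor}_e(\PP^1,\PP^n)$. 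Recall that $X$ is uniruled precisely when, for some $e$, the family of degree-$e$ rational curves on $X$ sweeps out $X$; since each such curve has a one-dimensional image, a covering family must carry at least $m-1$ moduli as unparametrized curves.

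Next I would run the parameter count. Over a fixed $f$ with image $C$ of degree $e$, the condition $C\subset X$ imposes on each degree-$d_i$ equation its vanishing on $C$; the restriction $H^0(\PP^n,\OO(d_i))\to H^0(\PP^1,\OO(d_ie))$ has target of dimension $d_ie+1$, so the expected number of conditions is $\sum_i(d_ie+1)=e\sum_i d_i+k$. With $\dim\operatorname{Mor}_e(\PP^1,\PP^n)=(n+1)(e+1)-1$ and $\dim\XX=\dim\MM+m$, the fibers of $\rho$ give the expected value
\[
\dim\II_e=\dim\MM+m-ce .
\]
Passing to images, the map $(X,f)\mapsto(X,f(\PP^1))$ has fibers of dimension at least $3$, since a nonconstant $f$ has finite reparametrization stabilizer and hence a three-dimensional $\mathrm{PGL}_2$-orbit; thus the family of pairs (complete intersection, rational curve) has dimension at most $\dim\II_e-3$, and the associated universal curve has dimension at most $\dim\II_e-2$. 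If a general $X$ were uniruled by degree-$e$ curves this universal curve would dominate $\XX$, forcing $\dim\II_e-2\ge\dim\MM+m$, i.e.\ $\dim\II_e\ge\dim\MM+m+2$. As the expected value $\dim\MM+m-ce$ is at most $\dim\MM+m$ for all $c\ge0$, $e\ge1$, this is a contradiction \emph{provided no component of $\II_e$ exceeds its expected dimension by more than one}. Everything therefore reduces to the uniform bound
\[
\dim\II_e\le \dim\MM+m+1\qquad\text{for all }e\ge1 .
\]

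Granting this bound, for each $e$ the universal curve lands in a proper closed subset of $\XX$ whose general fiber over $\MM$ has dimension $<m$, so the complete intersections uniruled by degree-$e$ curves form a proper closed $Z_e\subsetneq\MM$; a very general $X$ avoids the countable union $\bigcup_e Z_e$ and is not uniruled. Upgrading \emph{very general} to \emph{general} requires, in addition, a uniform bound on the degrees $e$ that can occur (or a constructibility argument for uniruledness), which I would supply separately. The bound itself is exactly a reformulation of the codimension estimate announced in the abstract: it controls, for each $e$, the locus of complete intersections carrying an excess-dimensional family of degree-$e$ rational curves.

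The hard part is this last bound, uniformly in $e$, and the obstruction is genuinely characteristic-$p$. Over $\CC$ the restriction maps $H^0(\OO(d_i))\to H^0(\OO(d_ie))$ are surjective for the generic curve and excess is easy to rule out; but the examples of Shioda and Liedtke show that special complete intersections (Fermat-type, supersingular) do carry far too many rational curves, produced by inseparable maps $f$ factoring through Frobenius, along which $df$ degenerates and the restriction maps drop rank. The core of the argument must therefore stratify $\operatorname{Mor}_e$ by the inseparable degree of $f$ and by the rank of the relevant restriction and evaluation maps, bound the dimension of each stratum together with the complete intersections containing its curves, and show that these pathological families — though nonempty — always occur in positive codimension in $\MM$. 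Establishing that a general $X$ never meets them, via a deformation-theoretic analysis of the normal bundle sequence and the precise effect of Frobenius, is the crux of the whole matter.
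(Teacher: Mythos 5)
Your argument reduces the theorem to the uniform bound $\dim\II_e\le\dim\MM+m+1$ for all $e\ge 1$, and you explicitly leave that bound unproven, describing only in outline a stratification of $\operatorname{Mor}_e$ by inseparable degree and ranks of restriction maps that ``must'' be carried out. That bound is not a technical loose end: it is the entire content of the theorem in characteristic $p$. The expected-dimension count you perform is exactly the classical characteristic-zero heuristic, and the examples you yourself cite (Shioda's unirational hypersurfaces of arbitrarily large degree, supersingular K3s) show that components of $\II_e$ can exceed the expected dimension by an amount growing with $e$; no argument is given that such excess components sit over a proper closed subset of $\MM$, uniformly in $e$. Without that, the proposal proves nothing. (There are also two smaller gaps you flag but do not close: passing from ``very general'' to ``general,'' and the absence of any a priori bound on the degrees $e$ that must be controlled.)

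The paper takes an entirely different, cohomological route that sidesteps the dimension count. It uses Katz's theorem that the middle $\ell$-adic cohomology of a general Calabi--Yau or general type complete intersection has coniveau type $0$, and then shows that a uniruled smooth projective variety cannot have this property: uniruledness forces $\CH_0$ to be supported on an ample divisor (move a point along a rational curve until it meets the divisor), the Bloch--Srinivas argument then yields a decomposition $N[\Delta]=\Gamma_1+\Gamma_2$ of the diagonal, and an argument using de Jong's alterations together with hard Lefschetz for $\ell$-adic cohomology shows that such a decomposition forces the middle cohomology into $N^1$. No control of spaces of rational curves of each degree is ever needed. If you want to salvage an incidence-variety approach, note that the paper's Section 4 runs the logic in the opposite direction: it takes the non-uniruledness of the general complete intersection (proved cohomologically) as the \emph{input} to a Grassmannian induction that then bounds the codimension of the locus of complete intersections containing rational curves; it does not derive non-uniruledness from a parameter count.
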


Christian Liedtke has kindly pointed out to us that one can use isocrystal methods to prove this result for surfaces.

Using Theorem \ref{nonuniruled} together with the methods of \cite{RiedlYang}, we can also obtain more quantitative results about uniruled hypersurfaces and hypersurfaces containing rational curves. For instance, we show the following.

\begin{theorem}
For $d \geq 2n-1$, a very general hypersurface will contain no rational curves, and moreover, the locus of hypersurfaces that contain rational curves will have codimension at least $d-2n+2$.
\end{theorem}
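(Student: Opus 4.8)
The plan is to bound the dimension of the incidence correspondence between hypersurfaces and the rational curves they contain, and to read off both assertions from a single codimension estimate. Set $N = \binom{n+d}{n} - 1$, so that $\PP^N = \PP(H^0(\PP^n, \OO(d)))$ parametrizes degree-$d$ hypersurfaces in $\PP^n$, and for each $e \ge 1$ let
\[
\Phi_e = \{([X],[f]) : f \colon \PP^1 \to \PP^n,\ \deg f = e,\ f(\PP^1) \subset X\},
\]
with projection $\pi \colon \Phi_e \to \PP^N$. Writing $\Sigma_e = \pi(\Phi_e)$ and $\Sigma = \bigcup_{e \ge 1} \Sigma_e$ for the locus of hypersurfaces containing a rational curve, it suffices to prove $\codim_{\PP^N} \Sigma_e \ge d - 2n + 2$ for every $e$. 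The codimension assertion then follows for $\Sigma$, and since $\Sigma$ is a countable union of the $\Sigma_e$ and the bound $d-2n+2$ is positive exactly when $d \ge 2n - 1$, a very general hypersurface lies in none of them and so contains no rational curve. Thus both statements reduce to the single bound $\codim \Sigma_e \ge d-2n+2$.

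First I would dispose of the generic strata by an elementary count. Multiple covers may be discarded, since the image of a degree-$e$ cover of a lower-degree curve lies in the same hypersurfaces as that curve, so I may assume $f$ is birational onto its image $C$. The space of such maps modulo reparametrization has dimension $(n+1)(e+1) - 4$, and if $C$ imposes the expected $de + 1$ conditions, i.e.\ if $H^0(\PP^n, \OO(d)) \to H^0(C, \OO_C(d))$ is surjective, then the hypersurfaces through $C$ form a linear subspace of codimension $de + 1$. Hence
\[
\codim_{\PP^N} \Sigma_e \ \ge \ (de + 1) - \big[(n+1)(e+1) - 4\big] \ = \ e(d - n - 1) - n + 4 .
\]
Because $d \ge 2n - 1$ gives $d - n - 1 \ge 0$, the right-hand side is minimized over $e \ge 1$ at $e = 1$, where it equals $d - 2n + 3 \ge d - 2n + 2$. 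So lines are the extremal case, consistent with the threshold $d \ge 2n - 1$, and every curve imposing the expected number of conditions is harmless.

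The real content lies in the deficient curves, those imposing far fewer than $de + 1$ conditions, such as plane rational curves of large degree, where the count above collapses. For these the estimate $\codim \Sigma_e \ge N - \dim \Phi_e$ is too weak, precisely because $\pi$ acquires positive-dimensional fibers: a single such curve on $X$ typically moves in a family. I would therefore bound the fiber dimension directly, i.e.\ control the dimension of the family of degree-$e$ rational curves on a general member $X$ of any component of the suspected bad locus. If that family were large enough to defeat the desired bound, its curves would cover $X$, forcing $X$ to be uniruled; Theorem \ref{nonuniruled} forbids this for the general complete intersection, and following the method of \cite{RiedlYang} one propagates the non-uniruledness across these strata, stratifying curves by the linear span they generate and comparing hypersurfaces in $\PP^n$ with those in smaller projective spaces when the span is degenerate.

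The step I expect to be the main obstacle is exactly the control of these deficient families. Theorem \ref{nonuniruled} supplies non-uniruledness only for the general hypersurface in the full linear system $\PP^N$, whereas the bad locus is a priori a proper subvariety, so the non-uniruledness must be transported to its general member; this is the delicate heart of the Riedl--Yang argument, and it is where the inductive comparison across dimensions does the work. In positive characteristic there is the additional burden that a dominant map from a family of rational curves need not be separable, so ``covered by rational curves'' must be upgraded to genuine separable uniruledness, or else Theorem \ref{nonuniruled} reapplied in a form robust to inseparability, before the contradiction can be drawn.
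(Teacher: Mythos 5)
Your reduction to the bound $\codim \Sigma_e \ge d-2n+2$ and the expected-dimension count for curves imposing independent conditions are fine, but that count is the easy part and, in the paper's argument, is never used at all. The entire content of the theorem is the step you explicitly set aside: the components of the incidence variety lying over deficient curves, where the fibers of $\pi$ jump. For these you offer only a pointer to ``the method of \cite{RiedlYang}'' without carrying it out, and your proposed mechanism --- bound the fiber dimension, deduce that the curves cover $X$, and contradict non-uniruledness --- runs straight into the wall you yourself identify: Theorem \ref{nonuniruled} applies only to a hypersurface that is general in all of $\PP^N$, while the general member of a bad component is a priori special. Stratifying curves by their linear span does not repair this, and is not what the paper does. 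As written, the proposal proves the statement exactly on the strata where it is easy and assumes it on the strata where it is hard, so there is a genuine gap.

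The paper's actual mechanism is different in shape. It works with the universal \emph{pointed} hypersurface $\UU_{n,d}$ and the locus $R_{n,d}$ of pairs $(p,X)$ with a rational curve in $X$ through $p$, and it induces not on the degree or span of the curve but on the dimension of the ambient projective space. Given a general point $(p,X)$ of a component of $R_{n,d}$, one chooses a very general pointed hypersurface $(p,Y)$ in a \emph{larger} projective space with no rational curve through $p$ --- this exists by Theorem \ref{notUniruledProp} when the larger dimension is $d-1$, the Calabi--Yau case --- and realizes both as parameterized linear sections of a single $(p,Z)$ in some huge $\PP^M$. Rational curves through $p$ in a linear section persist in every larger linear section containing it, so the bad locus of linear sections satisfies the hereditary hypothesis of Proposition \ref{GrassProp}, and each step down in the dimension of the section gains one in codimension: starting from codimension $\ge 1$ at dimension $d-1$ one gets $\codim(R_{n,d}\subset\UU_{n,d})\ge d-n$, and projecting to $\PP^N$ (losing at most the one-dimensional fibers coming from moving $p$ along the curve, plus the $n-1$ dimensions of $X$) yields $d-2n+2$. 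This sidesteps entirely the need to control families of deficient curves on special hypersurfaces. Finally, your worry about inseparability is moot: Theorem \ref{nonuniruled} excludes uniruledness outright --- i.e.\ the existence of any rational curve through a very general point --- not merely separable uniruledness, which is precisely why the coniveau argument was used instead of a canonical-bundle argument.
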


This transports results of Ein to postitive characteristic \cite{ein}.

In the sequel, all Chow groups will have rational coefficients. We will work over a field $k$ of positive characteristic $p$ and fix a prime $\ell \neq p$.

We would like to thank Arend Bayer, Izzet Coskun, Johan de Jong, Lawrence Ein, H\'el\`ene Esnault, J\'anos Koll\'ar, Christian Liedtke, and Jason Starr for numerous helpful conversations. Both authors were partially supported by an NSF RTG grant during this research.

\section{Preliminaries on notions of uniruledness and rational connectedness}

There are a number of inequivalent notions of rational connectedness and uniruledness in positive characteristic, so in this section we will give the precise definitions we will use and some basic facts about them. Details can be found in \cite{K}.

\begin{definition}
A variety $X$ is \emph{rationally chain connected} if there are varieties $U$ and $Y$, a proper flat morphism $g:U \to Y$ of relative dimension one whose geometric fibers have all components rational, and a map $u:U \to X$ such that the natural map \[ u \times u: U \times_Y U \to X \times X\] is dominant.

$X$ is \emph{rationally connected} if we can choose $U$, $Y$, $g$, and $u$ as above with the additional property that the geometric fibers of $g$ are irreducible.

$X$ is \emph{separably rationally connected} if there is a variety $Y$ and a morphism $u:Y \times \bP^1 \to X$ such that the natural map \[ Y \times \bP^1 \times \bP^1 \to X \times X \] is dominant and separable.
\end{definition}

\begin{definition}
A variety $X$ is \emph{uniruled} if there is a variety $Y$ and a dominant map $u:Y \times \bP^1 \to X$.

It is separably uniruled if we can find a separable such $u$.
\end{definition}

If the base field is algebraically closed, these properties and their negations are preserved under extension to another algebraically closed ground field. We will say that a variety is geometrically uniruled if its base change to the algebraic closure of the ground field is uniruled.

If the ground field is uncountable, there are alternate characterizations of some of these near-rationality properties which can be easier to use.  For instance, a variety is rationally connected if and only if two very general points can be connected by an irreducible rational curve. It is rationally chain connected if two very general points can be connected by a chain of rational curves. It is uniruled if a rational curve can be found passing through a very general point.

\section{Non-uniruledness of general complete intersections}
In this section, we prove that a general complete intersection with effective canonical bundle is not uniruled.  The proof comes from an analysis of the coniveau filtration of the middle cohomology of $X$.  On the one hand we have the result of Katz that the coniveau type must be $0$.  On the other hand, we show that for a uniruled variety of dimension $n$, the coniveau of $H^n$ is strictly positive.  This allows us to conclude that a general such complete intersection is not uniruled.

\subsection{Coniveau filtration}
We begin by recalling the definition of the coniveau filtration on cohomology, and some of its properties, which is given in \cite{SGA 7 II}, expos\'e XX.



\begin{definition}
\label{corrConiveau}
\begin{equation*} N^j H^i(X_{{\overline{k}}},\mathbb{Q}_\ell)=\sum _{(d,T,Z)}\image \cl(Z)_*:H^{i-2j}(T_{{\overline{k}}},\mathbb{Q}_\ell) \to H^i(X_{{\overline{k}}},\mathbb{Q}_\ell) \end{equation*}
where $d$ runs over all nonnegative integers, $T$ runs over all smooth proper connected varieties of dimension $d$, $Z$ runs over all algebraic cycles in $T \times X$ of codimension $j+d$, and $\cl(Z)_*$ is the map on cohomology induced by the correspondence $Z$, i.e., \[\cl(Z)_*(\alpha)=\pi_{2*}([Z] \cup \pi_1^*(\alpha)).\]
\end{definition}

We will say that the \emph{coniveau type} of $H^i(X_{{\overline{k}}},\QQl)$ is the greatest integer $j$ such that $H^i(X_{{\overline{k}}},\QQl)=N^j H^i(X_{{\overline{k}}},\QQl)$

\begin{lemma}
\label{functorialConiveau}
Let $Z \in A^{\dim X+k}(X \times Y)$. The induced map \[ Z_*:H^i(X_{\overline{k}},\QQl) \to H^{i+2k}(Y_{\overline{k}},\QQl) \] sends $N^j H^i(X_{\overline{k}},\QQl)$ to $N^{j+k}H^{i+2k}(Y_{\overline{k}},\QQl)$.
\end{lemma}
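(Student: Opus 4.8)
The plan is to deduce the statement directly from the definition of the coniveau filtration as a sum over correspondences, using the compatibility of the cohomological action of correspondences with their composition as algebraic cycles. First I would recall the composition of correspondences. Given a smooth proper connected variety $T$ of dimension $d$ and a cycle $W \in A^{d+j}(T \times X)$ (so that $\cl(W)_*\colon H^{i-2j}(T_{\overline{k}},\QQl) \to H^i(X_{\overline{k}},\QQl)$ raises degree by $2j$), together with our $Z \in A^{\dim X + k}(X \times Y)$, I form the composite
\[ Z \circ W := \pr_{TY\,*}\left( \pr_{TX}^* W \cdot \pr_{XY}^* Z \right), \]
where the projections are those from $T \times X \times Y$. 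A codimension count—flat pullback preserves codimension, the intersection product adds codimensions, and proper pushforward along $\pr_{TY}$, of relative dimension $\dim X$, adjusts the count back—shows that $Z \circ W \in A^{d+j+k}(T \times Y)$. This is precisely the codimension needed for a cycle on $T \times Y$ with $T$ of dimension $d$ to contribute to $N^{j+k}$.

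The essential input is the standard compatibility $(Z \circ W)_* = Z_* \circ \cl(W)_*$ on $\ell$-adic cohomology. I would establish this from the projection formula together with the compatibility of the cycle class map $\cl(-)$ with flat pullback, proper pushforward, and cup product, applied to the three projections out of $T \times X \times Y$. This is where the genuine work lies, and I expect it to be the main obstacle: one must track the compatibility of $\cl(-)$ with each of the operations entering $\cl(Z)_*(\alpha) = \pr_{2*}([Z] \cup \pr_1^*\alpha)$, and verify that these identities survive base change to $\overline{k}$. None of this is difficult in principle, but the bookkeeping of degrees and the verification of the compatibilities is the substantive part of the argument.

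Granting the composition identity, the lemma follows at once. I write a general element $\beta \in N^j H^i(X_{\overline{k}},\QQl)$ as a finite sum $\beta = \sum_a \cl(W^a)_*(\alpha_a)$, where each $T_a$ is smooth proper connected of dimension $d_a$, each $W^a \in A^{d_a+j}(T_a \times X)$, and $\alpha_a \in H^{i-2j}(T_{a,\overline{k}},\QQl)$. Applying $Z_*$ and invoking the compatibility gives
\[ Z_*(\beta) = \sum_a Z_* \cl(W^a)_*(\alpha_a) = \sum_a \cl(Z \circ W^a)_*(\alpha_a). \]
Since $i - 2j = (i+2k) - 2(j+k)$, each $\alpha_a$ lies in $H^{(i+2k)-2(j+k)}(T_{a,\overline{k}},\QQl)$, and each $Z \circ W^a \in A^{d_a + j + k}(T_a \times Y)$ is a cycle of codimension $(j+k) + d_a$ on $T_a \times Y$ with $T_a$ still smooth proper connected. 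Thus every summand on the right is exactly of the form appearing in Definition \ref{corrConiveau} for $N^{j+k} H^{i+2k}(Y_{\overline{k}},\QQl)$, whence $Z_*(\beta) \in N^{j+k} H^{i+2k}(Y_{\overline{k}},\QQl)$, as claimed.
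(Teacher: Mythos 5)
Your proof is correct and follows essentially the same route as the paper: both reduce to classes of the form $\cl(W)_*(\alpha)$ spanning $N^jH^i$, invoke the identity $Z_*\circ \cl(W)_* = \cl(Z\circ W)_*$, and check that $Z\circ W$ has codimension $d+j+k$ in $T\times Y$ so that the image lands in $N^{j+k}H^{i+2k}(Y_{\overline{k}},\QQl)$ by definition. The only difference is that you spell out the codimension count and flag the verification of the composition identity, which the paper simply cites as known.
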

\begin{proof}
We know that $N^j H^i(X_{\overline{k}},\QQl)$ is spanned by classes $\alpha$ for which there is a smooth projective variety $T$ of dimension $d$, a cycle $\Gamma \in A^{d+j}(T \times X),$ and a class $\beta \in H^{i-2j}(T_{\overline{k}},\QQl)$ such that $\Gamma_* \beta=\alpha$. It therefore suffices to prove this result for such classes.

But we know that $Z_* \alpha=(Z \circ \Gamma)_* \beta$, and $Z \circ \Gamma \in A^{d+k+j}(T \times Y)$, so it follows from the definition of the coniveau filtration that \[Z_* \alpha \in N^{j+k}H^{i+2k}(Y_{\overline{k}},\QQl)\].
\end{proof}

Given a smooth complete intersection of dimension $n$ in projective space, the Lefschetz hyperplane theorem determines that its cohomology is the same as projective space in all degrees but $H^n$. The following key result gives us information about the coniveau of this interesting cohomology group.

\begin{theorem}[Katz, \cite{SGA 7 II}, expos\'e XXI]\label{complete-intersection-coniveau}
Let $X$ be a general Calabi-Yau or general type $n$-dimensional complete intersection in projective space. Then the $n$th cohomology of $X$ has coniveau type 0.
\end{theorem}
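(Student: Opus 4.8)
The plan is to deduce coniveau type $0$ from the nonvanishing of the top Hodge piece $h^{n,0}=h^0(X,\omega_X)$, which is positive exactly in the Calabi--Yau and general type ranges, by reading this nonvanishing off of the slopes of Frobenius. Since the equality $N^1H^n=H^n$ can only \emph{become easier} to satisfy under specialization---algebraic cycles specialize, so in a smooth proper family the specialization isomorphism carries $N^1H^n$ of the generic fiber into $N^1H^n$ of a special fiber---the dimension of $N^1H^n$ is smallest on the general member of the family of complete intersections of fixed multidegree and dimension. Hence it suffices to exhibit a \emph{single} smooth complete intersection $X_0$ of the prescribed multidegree and dimension, over some finite field $\FFF_q$, for which $N^1H^n(X_{0,\overline{k}},\QQl)\subsetneq H^n(X_{0,\overline{k}},\QQl)$.

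First I would establish a slope bound: every eigenvalue of Frobenius on $N^1H^n(X_{0,\overline{k}},\QQl)$ has $q$-adic valuation at least $1$. By definition $N^1H^n$ is spanned by classes $\cl(Z)_*\beta$ with $T$ smooth proper of dimension $d$, $Z$ a cycle on $T\times X_0$ of codimension $d+1$, and $\beta\in H^{n-2}(T_{\overline{k}},\QQl)$. The class $\cl(Z)$ is algebraic, hence Frobenius acts on it by $q^{d+1}$; combined with the relative dimension $d$ of the projection $T\times X_0\to X_0$, the correspondence $\cl(Z)_*$ intertwines Frobenius on the source with Frobenius on the target up to a single Tate twist, by the same bookkeeping that underlies Lemma \ref{functorialConiveau}. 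Thus it multiplies Frobenius eigenvalues by $q$, and since $\beta$ is pure of weight $n-2$ its eigenvalues have valuation $\geq 0$; the output eigenvalues therefore have valuation $\geq 1$. Consequently any Frobenius eigenvector in $H^n$ whose eigenvalue is a $p$-adic unit (slope $0$) cannot lie in $N^1H^n$, and its mere existence forces $N^1H^n\subsetneq H^n$.

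It remains to produce one complete intersection $X_0$ whose $H^n$ carries a slope-$0$ Frobenius eigenvalue. By Katz--Messing the $\ell$-adic and crystalline characteristic polynomials of Frobenius agree, so this is equivalent to the slope-$0$ part of $H^n_{\mathrm{crys}}(X_0/W)$ being nonzero, i.e.\ to $X_0$ being ordinary in its bottom Hodge piece. Here the Calabi--Yau/general type hypothesis enters through $h^{n,0}=h^0(X_0,\omega_{X_0})>0$, which makes the relevant Hasse--Witt operator act on a nonzero space $H^n(X_0,\OO_{X_0})$; I would exhibit an ordinary member directly, either by computing that the Hasse--Witt matrix of a suitable diagonal (Fermat-type) complete intersection is invertible for appropriate $p$, or by invoking the known nonemptiness of the ordinary locus. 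Combined with the two previous paragraphs, a single ordinary $X_0$ yields coniveau type $0$ for $X_0$, and hence for the general member.

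The main obstacle is exactly this last step. The slope bound and the specialization semicontinuity are formal, but the existence of an ordinary complete intersection is the genuine content: ordinarity really fails for some members---supersingular complete intersections exist, for instance supersingular quartic K3 surfaces, whose entire $H^2$ is algebraic so that $N^1H^2=H^2$---and these are precisely the uniruled pathologies the paper must circumvent. Thus no bound valid for \emph{all} members can succeed, the restriction to the general member is essential, and the crux is a nonemptiness statement for the ordinary locus proved by an explicit Hasse--Witt computation or cited from the literature.
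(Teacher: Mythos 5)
First, note that the paper does not prove this statement at all: it is quoted verbatim from Katz (SGA~7~II, expos\'e~XXI), so there is no internal proof to compare against. That said, your strategy is essentially a reconstruction of Katz's actual argument. The three formal steps are sound: (i) coniveau can only grow under specialization, so one unit-root example suffices for the very general member; (ii) the definition of $N^1$ via correspondences forces every Frobenius eigenvalue on $N^1H^n$ to have $q$-adic valuation at least $1$ (the bookkeeping is the Tate-twist count from Lemma \ref{functorialConiveau}, exactly as you say); (iii) hence a single slope-$0$ eigenvalue in $H^n$ kills $N^1H^n=H^n$. Two small imprecisions: purity (weight $n-2$) controls archimedean absolute values, not $p$-adic ones --- the non-negativity of the valuations of the eigenvalues on $H^{n-2}(T_{\overline{k}},\QQl)$ is the separate theorem that Frobenius eigenvalues of smooth proper varieties are algebraic integers; and ``general'' must be read as ``very general/geometric generic,'' since the bad locus is only a countable union of closed subsets.

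The genuine gap is the one you flag yourself, and it is not a detail that can be waved at: you must produce, \emph{for every} multidegree in the Calabi--Yau/general-type range and \emph{every} characteristic $p$, one smooth complete intersection with a unit root in $H^n$. This is the entire non-formal content of Katz's expos\'e. The Fermat computation only works under congruence conditions on $p$ (the paper's own \S 3.3 needs $p\equiv 1 \pmod d$ with $d=n+1$, and Shioda's examples show the Fermat member can genuinely fail to work for other $p$), so it does not cover all cases; and ``invoking the nonemptiness of the ordinary locus'' for complete intersections is Illusie's theorem, a later and substantially harder result that would be circular to cite as the easy step here. Note also that your crystalline detour through Katz--Messing and the Hasse--Witt operator is equivalent to, but heavier than, the route the paper itself gestures at: by the Fulton trace formula, non-nilpotence (indeed nonvanishing of the trace) of Frobenius on $H^n(X_0,\OO_{X_0})$ is the same as $|X_0(\FFF_q)|\not\equiv 1 \pmod p$, which is exactly the point-count criterion of Proposition \ref{coniveau-congruence}. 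So your outline is the right one, but as written it reduces the theorem to an unproved nonemptiness statement rather than proving it.
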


We also have a result which will allow us to give explicit examples of such complete intersections.

\begin{proposition}[\cite{SGA 7 II}, expos\'es XIX, XXI, et XXII] \label{coniveau-congruence} 
Let $k=\FFF_q$ be a finite field. If all the higher $\ell$-adic cohomology groups of $X$ have positive coniveau type, then \[ |X(k)| \equiv 1 \pmod q \]
\end{proposition}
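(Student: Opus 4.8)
The plan is to combine the Grothendieck--Lefschetz trace formula with the principle that positive coniveau forces the Frobenius eigenvalues to be divisible by $q$. Since the $X$ under consideration is a smooth complete intersection, hence smooth and proper of some dimension $n$, geometric Frobenius $\Fr$ acts on each $H^i(X_{\overline{k}},\QQl)$, and the trace formula reads
\[ |X(\FFF_q)| = \sum_{i=0}^{2n} (-1)^i \Tr\bigl(\Fr \mid H^i(X_{\overline{k}},\QQl)\bigr). \]
The degree-zero term contributes exactly $\Tr(\Fr \mid H^0) = 1$, since $X$ is geometrically connected and Frobenius acts trivially on $H^0(X_{\overline{k}},\QQl) = \QQl$. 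So the whole statement reduces to showing that every higher term is divisible by $q$.

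First I would show that if $H^i(X_{\overline{k}},\QQl)$ has positive coniveau type, then $\Tr(\Fr \mid H^i) \equiv 0 \pmod q$. By hypothesis $H^i = N^1 H^i$, so by Definition \ref{corrConiveau} every class lies in the image of some $\cl(Z)_* \colon H^{i-2}(T_{\overline{k}},\QQl) \to H^i(X_{\overline{k}},\QQl)$, where $T$ is smooth proper of dimension $d$ and $Z$ has codimension $d+1$ in $T \times X$. The same Tate-twist bookkeeping that drives Lemma \ref{functorialConiveau} shows that this map is Galois-equivariant once its target is read with a twist: the class $[Z]$ lives in $H^{2(d+1)}(T\times X, \QQl(d+1))$, cupping with $\pr_1^*\alpha$ and applying the Gysin pushforward $\pr_{2*}$ of relative dimension $d$ lands in $H^i(X_{\overline{k}},\QQl(1))$. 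Because geometric Frobenius acts on $\QQl(1)$ by $q^{-1}$, each eigenvalue of $\Fr$ on $N^1 H^i$ has the form $q\lambda$, where $\lambda$ is an eigenvalue of Frobenius on $H^{i-2}(T_{\overline{k}},\QQl)$ for a smooth proper $T$, and hence is an algebraic integer.

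I would then conclude as follows. Each eigenvalue $q\lambda$ is an algebraic integer divisible by $q$, so $\Tr(\Fr \mid H^i)$, being the sum of these eigenvalues, is an algebraic integer divisible by $q$; since it is simultaneously a rational integer (the characteristic polynomial of Frobenius on $H^i$ has integer coefficients, independent of $\ell$), it lies in $q\mathbb{Z}$. Applying this to every higher cohomology group $i \geq 1$ in the trace formula annihilates all of those terms modulo $q$, leaving only the $i=0$ contribution of $1$, so $|X(\FFF_q)| \equiv 1 \pmod q$. The main obstacle is the middle step: correctly identifying the Tate twist $\QQl(1)$ introduced by an algebraic correspondence of the relevant codimension, and then invoking integrality of Frobenius eigenvalues to upgrade ``eigenvalues divisible by $q$'' into an honest congruence for the integer-valued trace. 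Once those two inputs are in place, the remainder is a direct bookkeeping with the Lefschetz formula.
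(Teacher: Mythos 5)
The paper never proves this proposition --- it is quoted directly from SGA 7 II (expos\'es XIX, XXI, XXII) --- so there is no in-text argument to compare against; your job here is to reconstruct the cited proof, and your reconstruction is the standard one and is essentially correct. The skeleton is right: the Grothendieck--Lefschetz trace formula, the observation that $H^0$ contributes $1$ for a geometrically connected $X$, and the fact that positive coniveau forces every Frobenius eigenvalue on $H^i$ ($i \geq 1$) to be $q$ times an algebraic integer, via the Tate twist $\QQl(1)$ carried by a codimension-$(d+1)$ correspondence combined with Deligne's integrality theorem for the cohomology of the proper variety $T$ (itself one of the results of SGA 7 II, expos\'e XXI). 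Two small points deserve care. First, the pairs $(T,Z)$ witnessing $N^1H^i = H^i$ need only be defined over a finite extension $\FFF_{q^m}$, so the image subspaces are a priori only $\Fr^m$-stable; you then get that $\mu^m/q^m$ is an algebraic integer for each eigenvalue $\mu$ of $\Fr$, which still yields that $\mu/q$ is an algebraic integer because the ring of algebraic integers is integrally closed. Second, your appeal to the integrality and $\ell$-independence of the characteristic polynomial of Frobenius on each individual $H^i$ is available for smooth proper $X$ (Weil II), but it is heavier than necessary: it suffices to note that the full alternating sum $\sum_{i\geq 1}(-1)^i\Tr(\Fr \mid H^i(X_{\overline{k}},\QQl)) = |X(k)|-1$ is a rational integer and is simultaneously $q$ times an algebraic integer, hence lies in $q\mathbb{Z}$. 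With those adjustments the argument is complete and matches the proof the paper is citing.
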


\subsection{Coniveau of uniruled varieties}

This subsection will be devoted to showing that a uniruled variety has positive coniveau type in its middle cohomology.  The proof, which is inspired by Esnault \cite{E}, has three steps.  First, we show that $\CH_0$ of uniruled varieties comes from $\CH_0$ of a proper subvariety.  Second, we recall a result of Bloch and Srinivas that shows that such varieties admit a decomposition of the diagonal of a certain form.  Finally, we use this decomposition of the diagonal and follow Bloch-Srinivas and Voisin to show that the middle cohomology of varieties with such a decomposition of the diagonal must have positive coniveau type.

\begin{proposition}
Let $X/k$ be a smooth projective variety with $k$ uncountable and algebraically closed. Suppose that $X$ is uniruled. Then there is a proper closed subvariety $Z \subset X$ such that the natural map $\CH_0(Z) \to \CH_0(X)$ is surjective, or equivalently, there is a nonempty open subset $V \subset X$ such that $\CH_0(V)=0$.
\end{proposition}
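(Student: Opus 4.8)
The two formulations are equivalent by the localization sequence for Chow groups: if $Z\subset X$ is closed with complement $V$, then $\CH_0(Z)\to\CH_0(X)\to\CH_0(V)\to 0$ is exact, so $\CH_0(Z)\to\CH_0(X)$ is surjective exactly when $\CH_0(V)=0$. The plan is therefore to produce a proper closed $Z$ (equivalently a dense open $V$) over which every closed point of $V$ becomes rationally trivial, and the mechanism for this is that any two points of a $\PP^1$ are rationally equivalent.

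First I would upgrade the uniruledness into a usable family. By definition there is a variety $Y$ and a dominant map $u\colon Y\times\PP^1\to X$; after resolving we may take $u$ to be a morphism on a dense open, and since a rational map from the smooth curve $\PP^1$ to the proper $X$ extends to a morphism, the restriction to a general fibre $\{y\}\times\PP^1$ is an honest morphism. Restricting $u$ to $Y'\times\PP^1$ for a general dimension-$(n-1)$ subvariety $Y'\subseteq Y$, I may assume $\dim Y=n-1$ with $u$ still dominant, so that for $y$ in a dense open $Y^\circ\subseteq Y$ the map $u_y\colon\PP^1\to X$, $t\mapsto u(y,t)$, is nonconstant with image a rational curve $C_y$; if it were constant for general $y$ the image of $u$ would have dimension at most $n-1$, contradicting dominance. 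Set $Z=\overline{u(Y\times\{0\})}$, a closed subvariety of dimension at most $\dim Y=n-1$, hence proper in $X$, and note $u_y(0)\in Z$ for every $y$. Let $W\subseteq X$ be a dense open subset contained in the (constructible, dense) image $u(Y^\circ\times\PP^1)$, so that every point of $W$ lies on some $C_y$ meeting $Z$.

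The crux is to make rational triviality hold for all points of an open set, not merely very general ones. I achieve this by enlarging $Z$: put $Z'=Z\cup(X\setminus W)$, which is again a proper closed subvariety since $Z$ and $X\setminus W$ each have dimension less than $n$, and set $V'=X\setminus Z'\subseteq W$. For a closed point $x\in V'$ choose $y\in Y^\circ$ and $t$ with $u_y(t)=x$, and let $\nu\colon\PP^1\to C_y\hookrightarrow X$ be the induced map. The preimage $\nu^{-1}(V')$ is $\PP^1$ minus the finite nonempty set lying over $C_y\cap Z'$ (nonempty because $u_y(0)\in Z\subseteq Z'$), hence a localization of $\PP^1$, whose Picard group and therefore whose group of zero-cycles is trivial. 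Thus the class $[\tilde x]$ of a point of $\nu^{-1}(V')$ over $x$ vanishes in $\CH_0(\nu^{-1}(V'))$, and pushing forward along the finite (hence proper) map $\nu$ gives $[x]=\nu_*[\tilde x]=0$ in $\CH_0(V')$. Since $\CH_0(V')$ is generated by such classes, $\CH_0(V')=0$, and taking the proper closed subvariety to be $Z'$ proves the proposition.

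The main obstacle is exactly this last passage from very general points to all points of a fixed open set: the rational-curve sweep only manifestly covers a dense open $W$, and points of $X\setminus W$ need not lie on any curve of the family. The enlargement $Z'=Z\cup(X\setminus W)$ sidesteps this by shrinking the open set to $V'\subseteq W$ while keeping $Z'$ a proper subvariety, so that every remaining point genuinely lies on a rational curve meeting the removed locus. The only other points requiring care are the reductions in the first step—ensuring $\dim Y=n-1$, the genericity of the fibres, and that $u_y$ is a genuine morphism—which are routine once the family is arranged.
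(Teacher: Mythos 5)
Your proof is correct, and it rests on the same underlying mechanism as the paper's: slide a point along a rational curve, on which any two points are rationally equivalent, until it lands on a fixed proper closed subvariety. The implementations differ in an instructive way, however. The paper takes $Z$ to be an ample divisor and argues for an \emph{arbitrary} point $p \in X$, which implicitly invokes the nontrivial fact that on a proper uniruled variety a rational curve passes through \emph{every} point, not just a general one (the swept locus $u(Y \times \PP^1)$ is a priori only a dense constructible set, and one needs a compactness argument for the space of rational curves to cover the remaining points). You sidestep exactly this issue by proving the open-set formulation instead: you anchor the curves at $Z = \overline{u(Y \times \{0\})}$, enlarge it to $Z' = Z \cup (X \setminus W)$ so that every point of the complementary open set $V'$ genuinely lies on a curve of the family meeting $Z'$, and then conclude $\CH_0(V') = 0$ from the vanishing of $\CH_0$ of a proper open subset of $\PP^1$ together with proper pushforward and the localization sequence. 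Your route is longer but more self-contained, and it correctly isolates and resolves the ``very general versus all points'' difficulty that the paper's one-line argument glosses over; the paper's route is shorter but leans on a standard fact from the theory of rational curves that it does not spell out. Both arguments are valid.
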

\begin{proof}
Let $Z$ be an ample divisor on $X$. Let $p \in X$ be arbitrary. We show that the $0$-cylce $p$ is equivalent to a cycle supported on $Z$, which will suffice to prove the claim. Since $X$ is uniruled, there is a rational curve $C$ in $X$ through $p$. Since $Z$ is ample, $C$ will meet $Z$. Moving $p$ along $C$, we see that $p$ is equivalent to a cycle supported on $Z$.



\end{proof}

In fact, as Jason Starr pointed out to us, one can obtain a stronger result. Namely, using Nadel's trick and the MRCC fibration as described by Debarre \cite{De}, one can prove that any variety of Picard rank 1 that is uniruled must also be rationally connected. By working in families, it is possible to prove this even for complete intersection surfaces, whose Picard numbers are not necessarily 1.

We now recall the decomposition of the diagonal, proposition 1 of \cite{BS}.

\begin{proposition}\label{decompdiagonal}
Let $X$ be a smooth projective variety of dimension $n$ and $Z \subset X$ a closed subvariety such that $\CH_0(Z) \to \CH_0(X)$ is surjective. Then there is a positive integer $N$, a divisor $D$, and cycles $\Gamma_1$, $\Gamma_2 \in \CH_n(X \times X)$ such that $\Gamma_1$ is supported on $Z \times X$, $\Gamma_2$ is supported on $X \times D$ and \[ N[\Delta]=\Gamma_1+\Gamma_2.\]
\end{proposition}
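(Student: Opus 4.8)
The plan is to run the classical Bloch--Srinivas generic-point argument. Write $K = k(X)$ for the function field of $X$ and consider the second projection $\pr_2 \colon X \times X \to X$; its generic fiber is $X_K := X \times_k \Spec K$, and the diagonal $\Delta$ restricts on this generic fiber to the tautological $K$-point $\eta \in X_K(K)$ corresponding to the generic point of $X$. The whole proof is an interplay between the class of $\eta$ in $\CH_0(X_K)$ and the localization sequence relating $\CH_n(X \times X)$ to $\CH_n$ of the open and closed pieces of $X \times X$ lying over the base $X$. Note that, under $\pr_2$, a cycle supported on $Z \times X$ restricts on the generic fiber to a cycle on $Z_K := Z \times_k \Spec K$, while a cycle supported on $X \times D$ for $D$ a proper closed subset restricts to zero, since $\eta \notin D$.

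First I would show that, after multiplying by a suitable positive integer $N$, the class $N[\eta] \in \CH_0(X_K)$ is represented by a zero-cycle $z$ supported on $Z_K$. This is where the surjectivity hypothesis enters: passing to a large algebraically closed field $\Omega \supseteq K$, the surjectivity of $\CH_0(Z) \to \CH_0(X)$ persists, so the class of $\eta$ becomes rationally equivalent over $\Omega$ to a cycle on $Z_\Omega$; a standard spreading-out and norm (transfer) argument then descends this equivalence back to $K$ at the cost of the integer $N$. With rational coefficients one can in fact take $N = 1$, but we follow Bloch--Srinivas and keep it general.

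Next I would spread out. The identity $\CH_0(X_K) = \varinjlim_U \CH_n(X \times U)$, the limit taken over dense opens $U \subseteq X$ in the base, shows that the rational equivalence $N[\eta] = [z]$ is already witnessed over some dense open $U$: there is a cycle $\Gamma_1 \in \CH_n(X \times X)$ supported on $Z \times X$ whose restriction to $X \times U$ agrees with $N[\Delta]$. Choose a divisor $D$ on $X$ with $X \setminus U \subseteq D$, which is possible since $X \setminus U \neq X$. Then $N[\Delta] - \Gamma_1$ restricts to zero in $\CH_n(X \times U)$, hence also in $\CH_n\bigl(X \times (X \setminus D)\bigr)$, and the localization exact sequence
$$\CH_n(X \times D) \longrightarrow \CH_n(X \times X) \longrightarrow \CH_n\bigl(X \times (X \setminus D)\bigr) \longrightarrow 0$$
exhibits $N[\Delta] - \Gamma_1$ as the image of a cycle $\Gamma_2 \in \CH_n(X \times X)$ supported on $X \times D$. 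Rearranging gives $N[\Delta] = \Gamma_1 + \Gamma_2$ with $\Gamma_1$ supported on $Z \times X$ and $\Gamma_2$ supported on $X \times D$, as desired.

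I expect the main obstacle to be the first step: rigorously justifying that the surjectivity of $\CH_0(Z) \to \CH_0(X)$ over the base descends to control the class of the single $K$-point $\eta$ on the generic fiber. This demands care about base change of $\CH_0$ to non-algebraically-closed fields, the passage to $\Omega$, and the transfer argument producing $N$. Once that is in hand, the spreading-out and localization steps are formal.
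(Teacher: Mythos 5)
Your argument is the standard Bloch--Srinivas proof and is correct; note that the paper itself gives no proof of this proposition, simply quoting it as Proposition 1 of \cite{BS}, and your sketch faithfully reproduces that reference's argument (restrict $\Delta$ to the tautological $K$-point of the generic fiber, use surjectivity to rewrite $N[\eta]$ as a cycle on $Z_K$, spread out over a dense open of the base, and absorb the complement into $X \times D$ via localization). The one step you flag as delicate --- that surjectivity of $\CH_0(Z) \to \CH_0(X)$ over $k$ controls the class of $\eta$ in $\CH_0(X_K)$ after passing to a large algebraically closed overfield and applying a transfer, producing the integer $N$ --- is indeed where the real content lies, and it is handled exactly as you describe in \cite{BS}.
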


Having a decomposition of the diagonal has strong implications on the coniveau type of $H^i(X_{\overline{k}},\QQl)$. The following proposition is essentially theorem 3.16 of \cite{V}, and the proof is very similar, with some minor adjustments needed for it to work in positive characteristic.

\begin{proposition} \label{coniveauchow}
Let $X$ be a smooth projective variety of dimension $n$. Let $Z \subset X$ be a closed subvariety of dimension $i$ such that $\CH_0(Z) \to \CH_0(X)$ is surjective. Then for $m>i$, $H^m(X_{{{\overline{k}}}},\QQl)$ has strictly positive coniveau type.
\end{proposition}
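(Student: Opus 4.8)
The plan is to run the Bloch--Srinivas--Voisin argument, using the decomposition of the diagonal to write the identity on middle cohomology as a sum of two correspondences, each of which I will show has image in positive coniveau. Writing $H^m$ for $H^m(X_{\overline{k}},\QQl)$, since $\CH_0(Z)\to\CH_0(X)$ is surjective with $\dim Z = i$, Proposition \ref{decompdiagonal} furnishes $N[\Delta]=\Gamma_1+\Gamma_2$ in $\CH_n(X\times X)$ with $\Gamma_1$ supported on $Z\times X$, $\Gamma_2$ supported on $X\times D$ for a divisor $D$, and $N>0$. The diagonal acts as the identity on cohomology, so $N\cdot\id = (\Gamma_1)_* + (\Gamma_2)_*$ on $H^m$. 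As $H^m$ is a $\QQl$-vector space and $N\neq 0$, it suffices to show that both $(\Gamma_1)_*$ and $(\Gamma_2)_*$ carry $H^m$ into $N^1H^m$ whenever $m>i$; then $H^m=N^1H^m$, which is exactly strictly positive coniveau type.

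The term $\Gamma_2$ is the easy one, because its smallness is in the target. I would take a smooth proper alteration $\tau_D:\tilde D\to D\hookrightarrow X$ with $\dim\tilde D=n-1$ and a lift $\tilde\Gamma_2\in\CH(X\times\tilde D)$, the alteration degree being invertible in $\QQl$-cohomology and absorbable into $N$. The projection formula then factors $(\Gamma_2)_*$ as $\tau_{D*}\circ(\tilde\Gamma_2)_*$, where $\tau_{D*}:H^{m-2}(\tilde D_{\overline{k}},\QQl)\to H^m$ is the pushforward along a map whose image $D$ has codimension one. Applying Lemma \ref{functorialConiveau} to the transposed graph of $\tau_D$ (a cycle of codimension $n$ in $\tilde D\times X$, so $k=1$), this map sends $N^0$ into $N^1H^m$, hence $(\Gamma_2)_*(H^m)\subseteq N^1H^m$ with no restriction on $m$.

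For $\Gamma_1$ the smallness is in the source. Choosing a smooth proper alteration $\tau:\tilde Z\to Z\hookrightarrow X$ with $\dim\tilde Z=i$ and a lift $\tilde\Gamma_1\in\CH_n(\tilde Z\times X)=A^i(\tilde Z\times X)$, the projection formula factors $(\Gamma_1)_*$, up to the invertible alteration degree, as $(\tilde\Gamma_1)_*\circ\tau^*$ with $\tau^*:H^m\to H^m(\tilde Z_{\overline{k}},\QQl)$ and $(\tilde\Gamma_1)_*$ running back to $H^m$. Since $\tilde\Gamma_1$ has codimension $\dim\tilde Z$, Lemma \ref{functorialConiveau} applies with $k=0$, so $(\tilde\Gamma_1)_*$ preserves the coniveau index; it therefore suffices to show $H^m(\tilde Z_{\overline{k}},\QQl)$ already has positive coniveau when $m>\dim\tilde Z$. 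Here I would invoke weak Lefschetz: taking a smooth ample divisor $W'\subset\tilde Z$, which exists by Bertini over the infinite field $\overline{k}$, the Gysin map $H^{m-2}(W'_{\overline{k}},\QQl)\to H^m(\tilde Z_{\overline{k}},\QQl)$ is surjective for $m>\dim\tilde Z$, and by Definition \ref{corrConiveau} (with $T=W'$ and $Z$ the graph of the inclusion, of codimension one) its image lies in $N^1$. Thus $H^m(\tilde Z_{\overline{k}},\QQl)=N^1H^m(\tilde Z_{\overline{k}},\QQl)$, so $\tau^*H^m$ is contained in it and $(\Gamma_1)_*(H^m)\subseteq N^1H^m$, finishing the reduction.

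The one genuinely positive-characteristic point, and the place where the promised minor adjustments enter, is the passage to the smooth models $\tilde Z$ and $\tilde D$: resolution of singularities is unavailable, so I would replace it throughout by de Jong's theorem on alterations, paying only a degree factor that is harmless with $\QQl$-coefficients. The accompanying nuisance is verifying that the cohomology class of $\Gamma_1$ (resp.\ $\Gamma_2$), which lives over the singular $Z\times X$ (resp.\ $X\times D$), really does lift to a cycle on $\tilde Z\times X$ (resp.\ $X\times\tilde D$) whose pushforward recovers a nonzero multiple of the original, so that the projection-formula factorizations are valid. I expect this cycle-level bookkeeping, rather than any cohomological input, to be the main thing to get right; the identity action of the diagonal, the two factorizations, and the weak Lefschetz surjectivity are all standard and characteristic-independent.
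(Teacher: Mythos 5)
Your argument is correct and follows the same skeleton as the paper's proof: the decomposition $N[\Delta]=\Gamma_1+\Gamma_2$ from Proposition \ref{decompdiagonal}, de Jong alterations in place of resolution, the factorization of the term supported on $X\times D$ through a pushforward $f_*$ that raises coniveau by one, and the factorization of the term supported on $Z\times X$ as $(\tilde\Gamma_1)_*\circ g^*$ together with Lemma \ref{functorialConiveau}, reducing everything to the positivity of the coniveau of $H^m(\tilde Z_{\overline{k}},\QQl)$ for $m>\dim\tilde Z$. The one genuine divergence is at that last step: the paper's Lemma \ref{hardLefschetz} deduces it from Deligne's hard Lefschetz theorem (cup product with $L^{\,m-\dim\tilde Z}$ is an isomorphism induced by the correspondence $\Delta_*(L^{\,m-\dim\tilde Z})$), whereas you use weak Lefschetz, namely the surjectivity of the Gysin map $H^{m-2}(W'_{\overline{k}},\QQl)\to H^m(\tilde Z_{\overline{k}},\QQl)$ for a smooth hyperplane section $W'$ once $m>\dim\tilde Z$, whose image lies in $N^1$ by Definition \ref{corrConiveau}. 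Both are valid for $\ell$-adic cohomology; your version is more elementary (it rests on Artin vanishing rather than the Weil conjectures) and suffices here because only coniveau $\geq 1$ is needed, while the paper's lemma gives the sharper statement that $H^{\dim\tilde Z+j}$ has coniveau type at least $j$, at no extra cost. Two minor points to tidy: check that $W'$ can be taken smooth and connected (a general hyperplane section of the smooth $\tilde Z$ is smooth in any characteristic, and connected when $\dim\tilde Z\geq 2$; the cases $\dim\tilde Z\leq 1$ are immediate), and the cycle-level bookkeeping you flag is handled exactly as you expect, by lifting each component of $\Gamma_j$ to a component of its preimage that dominates it and dividing by the degree of the alteration over that component, which is where the rational coefficients are used.
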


Before we give the proof, we will also need the following lemma.

\begin{lemma}
\label{hardLefschetz}
Let $Z$ be a smooth projective variety of dimension $n$. Then if $m=n+i$, $H^m(Z_{\overline{k}},\QQl)$ has coniveau type at least $i$.
\end{lemma}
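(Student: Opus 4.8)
The plan is to realize $H^{n+i}(Z_{\overline{k}},\QQl)$ as the image of a Gysin pushforward from a smooth subvariety of codimension $i$, and then feed that pushforward into the functoriality of the coniveau filtration (Lemma \ref{functorialConiveau}) to force the image to lie in $N^i$. The surjectivity of the pushforward is exactly where the Hard Lefschetz theorem enters, which accounts for the name of the lemma. We may assume $0 < i \leq n$, since for $i \leq 0$ the assertion is vacuous and $H^{n+i}$ vanishes for $i > n$.

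First I would fix a projective embedding $Z \subseteq \PP^N$ with hyperplane class $h$ and set $W = Z \cap H_1 \cap \cdots \cap H_i$ for $i$ general hyperplanes. By Bertini, $W$ is a smooth projective variety of dimension $n-i$; here I would record that the smoothness of a general hyperplane section of a smooth variety holds in arbitrary characteristic, since the hyperplanes tangent to $Z$ sweep out the dual variety, which has dimension at most $N-1$, so a general hyperplane is transverse. Writing $\iota:W \hookrightarrow Z$ for the inclusion, the Gysin map gives $\iota_* : H^{n-i}(W_{\overline{k}},\QQl) \to H^{n+i}(Z_{\overline{k}},\QQl)$, and the projection formula yields $\iota_*\iota^*\alpha = \alpha \cup h^i$.

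The first key input is Hard Lefschetz for $\ell$-adic cohomology (Deligne), which asserts that cup product with $h^i$ is an isomorphism $H^{n-i}(Z_{\overline{k}},\QQl) \xrightarrow{\sim} H^{n+i}(Z_{\overline{k}},\QQl)$. Since this isomorphism factors as $\iota_* \circ \iota^*$, the map $\iota_*$ must be surjective, so $H^{n+i}(Z_{\overline{k}},\QQl) = \image \iota_*$. The second key input is Lemma \ref{functorialConiveau}. I would identify $\iota_*$ with the action of the graph $\Gamma_\iota \subseteq W \times Z$, a cycle of codimension $(2n-i)-(n-i)=n$, i.e. $\Gamma_\iota \in A^{(n-i)+i}(W\times Z)$. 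Taking $k=i$, the lemma sends $N^0 H^{n-i}(W_{\overline{k}},\QQl)$ into $N^i H^{n+i}(Z_{\overline{k}},\QQl)$; as every class of $H^{n-i}(W_{\overline{k}},\QQl)$ trivially lies in $N^0$, we get $\image \iota_* \subseteq N^i H^{n+i}(Z_{\overline{k}},\QQl)$. Combining this with surjectivity gives $H^{n+i}(Z_{\overline{k}},\QQl) = N^i H^{n+i}(Z_{\overline{k}},\QQl)$, which is precisely coniveau type at least $i$.

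The main obstacle is not a genuine geometric difficulty but rather making sure the right tools are licensed in positive characteristic: one must invoke Hard Lefschetz in the $\ell$-adic setting, which is available by Deligne's work, and one must use the characteristic-free form of Bertini to produce a smooth $W$. The only real bookkeeping is to check that the Gysin map agrees with the correspondence attached to $\Gamma_\iota$ and that the codimension count $(n-i)+i = n$ matches the shift $k=i$ required by Lemma \ref{functorialConiveau}; once these are in place, the argument closes immediately.
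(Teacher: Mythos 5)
Your proof is correct, and it rests on the same two pillars as the paper's: Deligne's Hard Lefschetz theorem for $\ell$-adic cohomology, and the observation that a correspondence of the appropriate codimension lands in the appropriate piece of the coniveau filtration. The difference is in which correspondence you use. You pass to a smooth linear section $W = Z \cap H_1 \cap \cdots \cap H_i$ of dimension $n-i$, factor $\cup\, h^i$ as $\iota_*\iota^*$ via the projection formula, and apply Lemma \ref{functorialConiveau} to the graph $\Gamma_\iota \in A^{(n-i)+i}(W \times Z)$; this forces you to justify Bertini in positive characteristic (your dual-variety argument for that is fine) and to check that the Gysin map agrees with the action of $\Gamma_\iota$. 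The paper short-circuits all of this: the map $\cup\, L^i$ is itself induced by the correspondence $\Delta_*(L^i) \in A^{n+i}(Z \times Z)$, a cycle of codimension $i + \dim Z$ with $T = Z$, so Definition \ref{corrConiveau} applies directly with no auxiliary variety, no Bertini, and no projection formula. Your route has the small virtue of exhibiting an explicit geometric source of dimension $n-i$ for the whole of $H^{n+i}$ (i.e., it makes the ``supported in codimension $i$'' heuristic visible), but the paper's version is shorter and avoids every characteristic-$p$ subtlety except Hard Lefschetz itself.
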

\begin{proof}
Let $L$ be the class of an ample divisor on $Z$. By the hard Lefschetz theorem for $\ell$-adic cohomology (see \cite{D} or \cite{M}), $\cdot \cup L^i:H^{n-i}(Z_{{\overline{k}}},\QQl) \to H^{n+i}(Z_{{\overline{k}}},\QQl)$ is an isomorphism. But this map is induced by a correspondence, namely $\Delta_{*}(L^i)$, so its image lies in $N^iH^{n+i}(Z_{\overline{k}},\QQl)$ by Definition \ref{corrConiveau}.
\end{proof}

We can now give the proof of proposition \ref{coniveauchow}.

\begin{proof}
We use the decomposition of the diagonal $N[\Delta]=\Gamma_1+\Gamma_2$ of proposition \ref{decompdiagonal}, with $\Gamma_1$ supported on $X \times D$ and $\Gamma_2$ supported on $Z \times X$. Note that the diagonal correspondence induces the identity map on cohomology. It therefore suffices to show that the images of $(\Gamma_1)_*$ and $(\Gamma_2)_*$ lie inside the coniveau 1 part of $H^m(X_{\overline{k}},\QQl)$.

We begin with $(\Gamma_{2})_*$. Recall that an alteration of a variety $V$ is a generically finite, proper map $\tilde{V} \to V$ with $\tilde{V}$ smooth.  Any variety over any field admits an alteration \cite{DJ}. Take $\tilde{D}$ an alteration of $D$, with $f:\tilde{D} \to X$ the natural map. Then $(\Gamma_{2})_*$ factors through $f_*$, since $\Gamma_{2}$ is a pushforward of a rational cycle $\tilde{\Gamma}_2$ on $X \times \tilde{D}$, and $(\Gamma_{2})_* = f_* \circ (\tilde{\Gamma}_{2})_*$.  The image of $f_*$ is contained in $N^1 H^m(X_{\overline{k}},\QQl)$ by Definition \ref{corrConiveau} because $f$ is generically finite onto a proper subvariety of $X$ and pushforward by such a map increases cohomological degree.

We now show the result for $(\Gamma_1)_*$. Let $\tilde{Z}$ be an alteration of $Z$ and $g:\tilde{Z} \to X$ the natural map. As before, we know there is a cycle $\tilde{\Gamma}_1$ on $\tilde{Z} \times X$ which pushes forward to $\Gamma_1$ under $g \times \id$. We have $(\Gamma_{1})_*=(\tilde{\Gamma}_{1})_* \circ g^*$. It suffices to show that the image of $g^*$ is contained in $N^1H^{m}(\tilde{Z}_{\overline{k}}, \QQl)$ since the coniveau filtration is preserved under pushforward by lemma \ref{functorialConiveau}. By hypothesis, $m$ is greater than the dimension of $\tilde{Z}$, so by Lemma \ref{hardLefschetz}, it has positive coniveau type.
\end{proof}

By Theorem \ref{complete-intersection-coniveau}, we know that the middle cohomology of a general non-Fano complete intersection has coniveau type 0, but this is impossible for uniruled varieties, so we can deduce the following theorem.

\begin{theorem}\label{notUniruledProp}
Let $X$ be a smooth projective variety such that its middle cohomology has coniveau type 0. Then $X$ is not uniruled. In particular, a general non-Fano complete intersection is not uniruled.
\end{theorem}

Using Proposition \ref{coniveau-congruence} to relate coniveau to point-counting, we deduce the following corollary.

\begin{corollary} \label{pointCountUniruled}
Let $X$ be a smooth complete intersection in projective space over a finite field $k=\FF_q$. If \[|X(k)| \not\equiv 1 \pmod q \] then $X$ is not geometrically uniruled.
\end{corollary}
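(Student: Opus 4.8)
The plan is to prove the contrapositive: if $X$ is geometrically uniruled, then $|X(k)| \equiv 1 \pmod q$. By Proposition \ref{coniveau-congruence} it suffices to show that every higher $\ell$-adic cohomology group $H^m(X_{\overline{k}},\QQl)$ (for $m>0$) has positive coniveau type once $X_{\overline{k}}$ is uniruled. I would split these groups into the middle one, $H^n$, and all the others.

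For $m\neq n$, the Lefschetz hyperplane theorem identifies $H^m(X_{\overline{k}},\QQl)$ with the corresponding cohomology of the ambient $\PP^N$. For $m$ odd this group vanishes, and for $m=2j$ even it is spanned by the class $L^j$ of a linear subspace of codimension $j\ge 1$; such a class is the pushforward of the fundamental class of a linear space under inclusion, so it lies in $N^j H^m \subseteq N^1 H^m$ by Definition \ref{corrConiveau}. Hence every $H^m$ with $m\neq n$ and $m>0$ already has positive coniveau type, with no hypothesis on $X$ needed.

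The middle cohomology is where uniruledness enters, and here the subtlety is that $\overline{k}=\overline{\FFF_q}$ is countable, whereas the proposition producing a proper subvariety $Z$ with $\CH_0(Z)\to\CH_0(X)$ surjective is stated over an uncountable algebraically closed field. I would therefore fix an uncountable algebraically closed extension $\Omega \supseteq \overline{k}$ and base change. Since uniruledness is preserved under extension of algebraically closed ground fields, $X_\Omega$ is uniruled, so the first proposition of this subsection yields a proper closed $Z\subset X_\Omega$, say of dimension $i<n$, with $\CH_0(Z)\to\CH_0(X_\Omega)$ surjective. Applying Proposition \ref{coniveauchow} with $m=n>i$ shows that $H^n(X_\Omega,\QQl)$ has strictly positive coniveau type. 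As $\ell$-adic cohomology and the classes of algebraic cycles are insensitive to extension of the algebraically closed base field, the coniveau filtration computed over $\Omega$ agrees with the one over $\overline{k}$, so $H^n(X_{\overline{k}},\QQl)$ has positive coniveau type as well.

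Combining the two cases, all higher cohomology groups of $X$ have positive coniveau type, and Proposition \ref{coniveau-congruence} then gives $|X(k)|\equiv 1\pmod q$, which completes the contrapositive. The main obstacle I anticipate is exactly the descent across this countability gap: one must justify carefully that passing to the uncountable field $\Omega$ neither invalidates the cycle-theoretic input nor alters the resulting coniveau type over $\overline{k}$, i.e.\ that the geometric coniveau filtration on $\ell$-adic cohomology is invariant under extension of algebraically closed base fields.
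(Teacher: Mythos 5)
Your argument is exactly the one the paper intends: prove the contrapositive via Proposition \ref{coniveau-congruence}, using the Lefschetz hyperplane theorem to dispose of the non-middle cohomology and the uniruledness-implies-positive-coniveau chain (the $\CH_0$ proposition, Proposition \ref{decompdiagonal}, and Proposition \ref{coniveauchow}) for $H^n$. The only difference is that you explicitly handle the base change from the countable field $\overline{\FFF_q}$ to an uncountable algebraically closed extension, a point the paper leaves implicit; this is a correct and welcome addition rather than a divergence.
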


\subsection{Fermat hypersurfaces}

Some of the earliest examples of unirational hypersurfaces have been Fermat hypersurfaces. Suppose always that \[ d \not \equiv 0 \pmod p. \] Shioda and Katsura showed in \cite{SK} that for odd $n \geq 3$ and $d \geq 4$, the Fermat hypersurface of degree $d$ in $\bP^n$ is unirational if there is an integer $\nu$ such that \[p^\nu \equiv -1 \pmod d.\] If $n=3$, then the converse also holds.

By using Corollary \ref{pointCountUniruled}, we can give examples of non-uniruled Fermat hypersurfaces. Given a hypersurface over a finite field, we can use the Fulton trace formula \cite{F} to count the number of points modulo $p$. More precisely, we have the following corollary of the trace formula, which is proposition 5.15 of \cite{M2}.

\begin{proposition}
If $X=V(F)$ is a hypersurface in $\PP^n$ of degree $n+1$, then \[|X(\FFF_Q)| \not\equiv 1 \pmod p \] if and only if the coefficient of $(x_0\cdots x_n)^{p-1}$ in $F^{p-1}$ is nonzero.
\end{proposition}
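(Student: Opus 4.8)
The plan is to compute $|X(\FFF_Q)| \bmod p$ by counting points on the affine cone $\widehat{X} = V(F) \subseteq \AAA^{n+1}$ and then dividing out the scaling action of $\gG_m$. Writing $M = \#\{x \in \FFF_Q^{n+1} : F(x) = 0\}$, the key elementary input is that $1 - F(x)^{Q-1}$ is the indicator function of $\{F = 0\}$ on $\FFF_Q^{n+1}$, since $t^{Q-1} = 1$ for $t \in \FFF_Q^\times$ and $0^{Q-1} = 0$. Summing over all $x$ and using $Q^{n+1} \equiv 0 \pmod p$ gives $M \equiv -\sum_{x \in \FFF_Q^{n+1}} F(x)^{Q-1} \pmod p$. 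This is exactly the shape of the Fulton trace formula specialized to a hypersurface, and it is the computation I would take as the starting point.

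Next I would expand $F^{Q-1} = \sum_\alpha c_\alpha x^\alpha$ and evaluate each power sum coordinatewise, using $\sum_{t \in \FFF_Q} t^a \equiv -1 \pmod p$ when $a > 0$ and $(Q-1) \mid a$, and $\equiv 0$ otherwise. Thus $\sum_x x^\alpha$ is nonzero mod $p$ only when every $\alpha_i$ is a positive multiple of $Q-1$. Here is the one genuinely clean step: $F^{Q-1}$ is homogeneous of degree $(n+1)(Q-1)$, so any surviving $\alpha$ satisfies $\sum_i \alpha_i = (n+1)(Q-1)$ with each $\alpha_i \geq Q-1$, forcing $\alpha = (Q-1, \dots, Q-1)$. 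Hence only the single monomial $(x_0 \cdots x_n)^{Q-1}$ contributes, and $M \equiv (-1)^n c \pmod p$, where $c$ is its coefficient in $F^{Q-1}$. To pass to projective points I would use $M = 1 + (Q-1)\,|X(\FFF_Q)|$ (the $1$ is the cone point, and each projective point contributes $Q-1$ cone points), so $|X(\FFF_Q)| \equiv 1 - M \equiv 1 + (-1)^{n+1} c \pmod p$, giving $|X(\FFF_Q)| \not\equiv 1 \pmod p$ if and only if $c \neq 0$. When $Q = p$ this already is the statement, since then $c$ is precisely the coefficient of $(x_0 \cdots x_n)^{p-1}$ in $F^{p-1}$.

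The main obstacle is the reduction from $F^{Q-1}$ to $F^{p-1}$ for general $Q = p^s$. Writing $Q - 1 = (p-1)(1 + p + \cdots + p^{s-1})$ and using that the $j$-fold Frobenius raises coefficients to the $p^j$ power while scaling exponents by $p^j$, one gets $F^{Q-1} = \prod_{j=0}^{s-1} (F^{p-1})^{p^j}$, and the ``diagonal in every factor'' choice contributes the norm $b^{1 + p + \cdots + p^{s-1}} = b^{(Q-1)/(p-1)}$, where $b$ is the coefficient of $(x_0 \cdots x_n)^{p-1}$ in $F^{p-1}$; this is nonzero exactly when $b \neq 0$. It would therefore suffice to show that no other, carried, choices of exponents contribute to the coefficient of $(x_0 \cdots x_n)^{Q-1}$. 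Such carries can occur combinatorially, so I do not expect to rule them out by pure bookkeeping.

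The clean resolution of this obstacle is the coherent-cohomology interpretation underlying Fulton's formula: $|X(\FFF_Q)| \bmod p$ equals the alternating trace of the $Q$-power Frobenius on $H^\bullet(X, \OO_X)$, whose only nontrivial terms are $H^0$, contributing $1$, and the middle $H^{n-1}(X, \OO_X)$, on which the Frobenius trace is the Hasse--Witt invariant. This invariant is multiplicative under extension of $\FFF_p$, hence equals the norm of the corresponding invariant over $\FFF_p$, which in turn is the coefficient of $(x_0 \cdots x_n)^{p-1}$ in $F^{p-1}$. Invoking this identifies $c$ with $b^{(Q-1)/(p-1)}$ and completes the equivalence; citing \cite{F} and \cite{M2} packages exactly this input, so in practice I would carry out the elementary computation above and then quote the Hasse--Witt base-change compatibility to descend from $F^{Q-1}$ to $F^{p-1}$.
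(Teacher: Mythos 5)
Your argument is correct in outline, but it takes a genuinely different route from the paper, which in fact offers no proof at all: the statement is quoted as Proposition 5.15 of \cite{M2}, a consequence of Fulton's trace formula \cite{F}, $|X(\FFF_Q)| \equiv \sum_i (-1)^i \Tr\bigl(\Fr_Q \mid H^i(X,\OO_X)\bigr) \pmod p$, combined with the identification of the Frobenius action on $H^{n-1}(X,\OO_X)$ (the Hasse--Witt invariant of the Calabi--Yau hypersurface) with the coefficient of $(x_0\cdots x_n)^{p-1}$ in $F^{p-1}$. Your Chevalley--Warning-style character sum over the affine cone replaces that coherent-cohomology input with an elementary count, and for $Q=p$ it is entirely self-contained; all the intermediate steps (the indicator function $1-F(x)^{Q-1}$, the power-sum evaluation, the degree argument forcing $\alpha=(Q-1,\ldots,Q-1)$, and the relation $M=1+(Q-1)|X(\FFF_Q)|$) check out.

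The one place you retreat to a citation --- descending from the coefficient of $(x_0\cdots x_n)^{Q-1}$ in $F^{Q-1}$ to the coefficient of $(x_0\cdots x_n)^{p-1}$ in $F^{p-1}$ --- is more tractable than you fear: the ``congruence plus degree'' trick you already used rules out all carries. Write $F^{pq-1}=F^{p-1}\cdot (F^{q-1})^p$. Every exponent occurring in $(F^{q-1})^p$ is divisible by $p$, so a product of monomials from the two factors can equal $(x_0\cdots x_n)^{pq-1}$ only if the exponent vector $\alpha$ contributed by $F^{p-1}$ satisfies $\alpha_i\equiv -1\pmod p$, hence $\alpha_i\geq p-1$ for every $i$; since $\sum_i\alpha_i=(n+1)(p-1)$, this forces $\alpha=(p-1,\ldots,p-1)$, and the complementary factor is then forced to be $\bigl((x_0\cdots x_n)^{q-1}\bigr)^p$. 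Writing $a_q$ for the coefficient of $(x_0\cdots x_n)^{q-1}$ in $F^{q-1}$, this gives $a_{pq}=a_p\cdot a_q^{\,p}$, and induction yields $a_{p^s}=a_p^{\,1+p+\cdots+p^{s-1}}$, which vanishes exactly when $a_p$ does. With this observation your proof closes without invoking Hasse--Witt base change, making it strictly more elementary than the cited source.
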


A straightforward calculation then implies that if $d=n+1$ and \[ p \equiv 1 \pmod d \] then this coefficient is nonzero, so in particular, these Fermat hypersurfaces are not geometrically uniruled.

\section{Non-existence of rational curves}
In characteristic $0$, Ein \cite{ein} proves that a very general hypersurface in $\PP^n$ of degree $d$ with $d \geq 2n-1$ contains no rational curves.  Voisin \cite{voisin} slightly strengthens this, proving that a very general hypersurface of degree $d \geq 2n-2$ contains no rational curves.  Their techniques become more complicated in characteristic $p$.  In particular, they are dealing with positivity of certain vector bundles under various finite covers, and in characteristic $p$, it is not immediately clear how to show that these behave well under inseparable base change. We present an alternate proof of Ein's result using strictly characteristic $p$ techniques which work for all complete intersections. We do not know of a proof of Voisin's result in positive characteristic.


Consider the universal complete intersection of multidegree $\overline{d}=(d_1,\ldots,d_i)$ in $\PP^n$, $\UU_{n,\overline{d}} = \{(p,X) | p \in X,\,X \in \mathcal{H}_{\overline{d}}\}$, where $\mathcal{H}_{\overline{d}}$ is the Hilbert scheme of complete intersections in $\PP^n$ of multidegree $\overline{d}$, and let $R_{n,\overline{d}} \subset \UU_{n,\overline{d}}$ be the space of pairs $(p,X)$ with a rational curve in $X$ passing through $p$.  $\UU_{n,\overline{d}}$ is a projective variety, and $R_{n,\overline{d}}$ will be a countable union of projective varieties.  Our main result is the following.

\begin{theorem}
\label{hyperbolicityThm}
If $R_{n,\overline{d}}$ is a proper subset of $\UU_{n,\overline{d}}$, then for any $c \geq 0$, every component of $R_{n-c,\overline{d}}$ is codimension at least $c+1$ in $\UU_{n,\overline{d}}$.
\end{theorem}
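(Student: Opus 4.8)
The plan is to reduce the statement to a single inductive step comparing consecutive ambient dimensions and then run a downward induction. Concretely, I would prove the one-step inequality
\[ \codim_{\UU_{m-1,\overline{d}}} R_{m-1,\overline{d}} \ \geq\ \codim_{\UU_{m,\overline{d}}} R_{m,\overline{d}} + 1 \]
for every $m$ (applied to each irreducible component, with $\codim$ of $R$ meaning the minimal codimension of a component), and then induct downward starting from $m=n$. The base case $c=0$ is exactly the hypothesis that $R_{n,\overline{d}}$ is a proper closed subset of the irreducible variety $\UU_{n,\overline{d}}$, hence has codimension at least $1$. Note that the one-step inequality itself will be a pure dimension count that does not use properness, so properness enters only through the base case. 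Since $R$ is a countable union of projective varieties, all dimension counts are carried out on a fixed component, i.e. for rational curves of bounded degree.

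For the one-step inequality, fix a hyperplane $H_0 \cong \PP^{m-1} \subset \PP^m$ and record the basic fibration: the locus $\{(p,X)\in\UU_{m,\overline{d}} : p\in H_0\}$ maps to $\UU_{m-1,\overline{d}}$ by $X \mapsto X\cap H_0$, with fiber over $Y$ the space of complete intersections $X$ in $\PP^m$ restricting to $Y$ on $H_0$. Writing $F$ for the dimension of this extension fiber and noting that $\{p\in H_0\}$ is a single linear condition, this yields the clean identity $\dim \UU_{m,\overline{d}} - \dim\UU_{m-1,\overline{d}} = F+1$, which avoids having to compute $\dim\mathcal{H}_{\overline{d}}$ explicitly. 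A rational curve in $Y=X\cap H_0$ through $p$ lies in $X$, so this fibration restricts to one exhibiting, for each component $W$ of $R_{m-1,\overline{d}}$, a family inside $R_{m,\overline{d}}$ of dimension $\dim W + F$. This already recovers the wrong-direction bound $\codim R_{m-1}\geq \codim R_m -1$, so the entire content is to improve the dimension of the image by $2$.

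To find those two extra dimensions I would let the hyperplane vary. Form the incidence variety
\[ \mathcal D_W=\{(p,X,H): H\subset\PP^m \text{ a hyperplane},\ p\in H,\ (p,X\cap H)\in W^{(H)}\}, \]
where $W^{(H)}$ is the copy of $W$ inside the rational-curve locus of $H\cong\PP^{m-1}$. Projecting to the dual space $(\PP^m)^\vee$ realizes $\mathcal D_W$ as a bundle with fiber a copy of $W$ times the extension space, so $\dim\mathcal D_W = m + \dim W + F$. The second projection $(p,X,H)\mapsto(p,X)$ lands in $R_{m,\overline{d}}$. If its generic fiber has dimension at most $m-2$, then the image has dimension at least $\dim W + F + 2$, and combining with $\dim\UU_m-\dim\UU_{m-1}=F+1$ gives $\codim W \geq \codim R_m + 1$ for every component $W$, completing the step.

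The crux — and the step I expect to be the main obstacle — is the bound on the generic fiber of the second projection: for a generic $(p,X)$ in the image, the set of hyperplanes $H\ni p$ for which $X\cap H$ carries the relevant rational curve through $p$ should have dimension at most $m-2$. Any such $H$ must contain the curve, hence its linear span; since a nonconstant rational curve spans a subspace of dimension $s\geq 1$, the hyperplanes containing a fixed curve form a family of dimension $m-1-s\leq m-2$, which is exactly the needed bound (and explains the improvement "$+2 = s+1$"). The genuine difficulty is controlling the family of \emph{all} rational curves through $p$ in $X$: were these to move in a positive-dimensional family with varying spans, the fiber could be larger. I would handle this by choosing $W$ so that the marked curves are as rigid as possible and arguing that for the generic point of the image the relevant curve is essentially unique, so the fiber is precisely the pencil of hyperplanes through its span; the opposite situation, in which curves genuinely move, forces $X$ to be swept out by rational curves and should be absorbed by letting $p$ move off $H_0$, which only enlarges the image. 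Making this dichotomy precise is where the real work lies.
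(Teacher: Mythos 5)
Your reduction to a one-step inequality and the dimension bookkeeping around the incidence variety $\mathcal D_W$ are internally consistent, but the proof has a genuine gap exactly where you flag it: the bound of $m-2$ on the generic fiber of the second projection $\mathcal D_W \to \UU_{m,\overline{d}}$ is not a technicality to be cleaned up later, it is the entire theorem, and your proposed dichotomy does not resolve it. Worse, your claim that the one-step inequality ``is a pure dimension count that does not use properness'' is false, which shows the fiber bound cannot hold unconditionally: take $\overline{d}=(2)$ and $m=3$, so $R_{2,(2)}=\UU_{2,(2)}$ (every pointed conic is rational) while for $(p,X)$ a pointed smooth quadric surface \emph{every} hyperplane through $p$ cuts out a conic, so the fiber is the full $\PP^{m-1}$ of dimension $m-1$, not $m-2$. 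This is precisely the situation you describe as ``curves genuinely move with varying spans,'' and ``letting $p$ move off $H_0$'' does not address it, since the fiber is computed over a fixed $(p,X)$. Any correct argument must inject the hypothesis that $R_{n,\overline{d}}$ is proper into the inductive step, and your setup has no mechanism for doing so.

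The paper avoids this problem entirely by linearizing it. Rather than fibering $\UU_{m,\overline{d}}$ over $\UU_{m-1,\overline{d}}$ by hyperplane sections, it fixes a general point $(p,X)$ of a component of $R_{n-c,\overline{d}}$, a very general $(p,Y)\in\UU_{n,\overline{d}}$ with no rational curve through $p$ (this is where properness enters), and a single large complete intersection $(p,Z)\in\UU_{M,\overline{d}}$ admitting both $(p,X)$ and $(p,Y)$ as parameterized linear sections. The family $\FF_m$ of $m$-dimensional linear sections of $(p,Z)$ is governed by a Grassmannian of linear subspaces through $p$, and the only geometric input needed is the trivial monotonicity that a rational curve through $p$ in a linear section persists in every larger linear section. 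Proposition \ref{GrassProp} then converts ``codimension $\geq 1$ at level $n$'' into ``codimension $\geq c+1$ at level $n-c$'' by pure Grassmannian incidence combinatorics, with no control whatsoever on how rational curves through a point vary. If you want to salvage your approach, you would need to restrict your incidence variety to hyperplane sections of a fixed $Z$ containing a curve-free section, at which point you have essentially reconstructed the paper's argument; as written, the proposal does not constitute a proof.
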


Combined with Proposition \ref{notUniruledProp}, this gives

\begin{corollary}
The space of multidegree $\overline{d}$ complete intersections in $\PP^n$ containing a rational curve has codimension at least $\sum d_i-2n+2$. The space of uniruled hypersurfaces has codimension at least $\sum d_i-n$.
\end{corollary}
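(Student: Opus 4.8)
The plan is to derive both codimension estimates from Theorem~\ref{hyperbolicityThm} by first invoking it in a carefully chosen ambient dimension and then projecting the universal family $\UU_{n,\overline{d}}$ down to the moduli space $\mathcal{H}_{\overline{d}}$.

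First I would certify the hypothesis of Theorem~\ref{hyperbolicityThm} in the ambient space $\PP^m$ with $m = \sum_i d_i - 1$. There the general complete intersection of multidegree $\overline{d}$ is Calabi--Yau, so by Theorem~\ref{complete-intersection-coniveau} its middle cohomology has coniveau type $0$, and Theorem~\ref{notUniruledProp} then shows it is not uniruled; in particular the general such $X$ contains points lying on no rational curve, so $R_{m,\overline{d}}$ is a proper subset of $\UU_{m,\overline{d}}$. I would then apply Theorem~\ref{hyperbolicityThm} with this $m$ and $c = m - n = \sum_i d_i - 1 - n$. If $X$ is Fano (so $c < 0$) the asserted bound $\sum_i d_i - 2n + 2$ is nonpositive and there is nothing to prove; otherwise $c \ge 0$, and the theorem gives that every component of $R_{n,\overline{d}}$ has codimension at least $c+1 = \sum_i d_i - n$ in $\UU_{n,\overline{d}}$.

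It then remains to descend along $\pi\colon \UU_{n,\overline{d}} \to \mathcal{H}_{\overline{d}}$, whose fibers are the complete intersections themselves and so have dimension $n - k$, with $k$ the number of equations. For the first statement $\pi(R_{n,\overline{d}})$ is precisely the locus of complete intersections containing a rational curve, and over any such $X$ the fiber of $\pi|_{R_{n,\overline{d}}}$ contains an entire rational curve and hence has dimension at least $1$; a component-by-component application of the fiber-dimension theorem gives $\dim \pi(R_{n,\overline{d}}) \le \dim R_{n,\overline{d}} - 1$. Combining this with $\dim \UU_{n,\overline{d}} = \dim \mathcal{H}_{\overline{d}} + (n-k)$ and the codimension bound from the previous step yields codimension at least $\sum_i d_i - 2n + k + 1 \ge \sum_i d_i - 2n + 2$. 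For the second statement I would specialize to hypersurfaces ($k = 1$) and note that over a uniruled $X$ the rational curves sweep out a dense subset, so the fiber of $\pi|_{R_{n,\overline{d}}}$ over the uniruled locus has the full dimension $n - 1$ instead of merely $1$; feeding this larger fiber dimension into the same count upgrades the estimate to codimension at least $d - n$.

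The conceptual heart of the argument, and the step I expect to be the real obstacle, is the first one: the whole deduction hinges on exhibiting some ambient dimension in which the properness hypothesis of Theorem~\ref{hyperbolicityThm} can be verified, and the only available input is the non-uniruledness of the general Calabi--Yau complete intersection, which is exactly what forces $m = \sum_i d_i - 1$. The rest is dimension bookkeeping; the one point that needs care is that the bounds on $\dim \pi(R_{n,\overline{d}})$ are obtained separately on each of the countably many components of $R_{n,\overline{d}}$, so one must confirm that the conclusion, namely that each component has codimension at least $\sum_i d_i - 2n + 2$, is the correct interpretation of the codimension of this countable union.
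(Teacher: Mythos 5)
Your proposal is correct and follows essentially the same route as the paper: both verify the hypothesis of Theorem \ref{hyperbolicityThm} in ambient dimension $\sum d_i - 1$ via non-uniruledness of the general Calabi--Yau complete intersection, apply the theorem to get codimension $\sum d_i - n$ in $\UU_{n,\overline{d}}$, and then descend to $\mathcal{H}_{\overline{d}}$ by comparing fiber dimensions (at least $1$ for the rational-curve locus, $n-1$ for the uniruled locus). Your bookkeeping, including the sharper bound $\sum d_i - 2n + k + 1$ for $k$ defining equations and the component-wise interpretation of codimension for the countable union, matches and slightly tidies the paper's argument.
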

\begin{proof}
By Proposition \ref{notUniruledProp}, we see that $R_{\sum d_i-1,\overline{d}}$ is codimension at least $1$ in $\UU_{\sum d_i-1,\overline{d}}$, so it follows from Theorem \ref{hyperbolicityThm} that for $n<\sum d_i-1$, $R_{n,\overline{d}}$ is codimension at least $\sum d_i-n$ in $\UU_{n,\overline{d}}$. The fibers of the map from $R_{\sum d_i-c,\overline{d}}$ to the space of all hypersurfaces containing a rational curve are at least one-dimensional, so we can conclude that the codimension of the space of hypersurfaces containing a rational curve in the space of all hypersurfaces of degree $d$ in $\PP^n$ is at least $\sum d_i-2n+2$. The fibers of the map from $R_{n,\overline{d}}$ to the space of hypersurfaces have dimension $n-1$ when considering uniruled hypersurfaces, so we conclude that the space of uniruled hypersurfaces has codimension at least $\sum d_i-n$.
\end{proof}

The techniques of the proof of Theorem \ref{hyperbolicityThm} are similar to those in \cite{RiedlYang}, but we reproduce them here because the statement that we are proving is slightly different and because we want to emphasize the independence of characteristic.  We first need an elementary result on Grassmannians that we quote from \cite{RiedlYang}.

\begin{proposition}
\label{GrassProp}
Let $m \leq n$.  Let $B \subset \gG(m,n)$ be irreducible of codimension at least $\epsilon \geq 1$.  Let $C \subset \gG(m-1,n)$ be a nonempty subvariety satisfying the following condition: $\forall c \in C$, if $b \in \gG(m,n)$ has $c \subset b$, then $b \in B$.  Then it follows that the codimension of $C$ in $\gG(m-1,n)$ is at least $\epsilon + 1$.
\end{proposition}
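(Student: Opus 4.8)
The plan is to work on the partial flag incidence variety
\[ \Phi = \{(c,b) \in \gG(m-1,n) \times \gG(m,n) : c \subset b\}, \]
with its two projections $\pi_1 : \Phi \to \gG(m-1,n)$ and $\pi_2 : \Phi \to \gG(m,n)$. Both are projective-space bundles: the fibre of $\pi_1$ over $c$ is the $\PP^{n-m}$ of $m$-planes containing $c$, and the fibre of $\pi_2$ over $b$ is the $\PP^m$ of $(m-1)$-planes contained in $b$. In particular both maps are flat (hence open) and proper, and $\dim \Phi = \dim \gG(m-1,n) + (n-m)$.

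Set $A = \pi_2^{-1}(B)$. Since $\pi_2$ is a projective bundle and $B$ is irreducible, $A$ is irreducible, and its codimension in $\Phi$ equals that of $B$, so $\codim_\Phi A \ge \epsilon$. The hypothesis on $C$ translates into $\pi_1^{-1}(C) \subseteq A$: saying every $b \supset c$ lies in $B$ is exactly saying that the whole $\pi_1$-fibre over $c$ lands in $A$. It is therefore natural to enlarge $C$ to the closed locus $C' = \{c : \pi_1^{-1}(c) \subseteq A\}$, which is closed because $\pi_1$ is open (its complement is $\pi_1(\Phi \setminus A)$, the image of an open set under an open map). We have $C \subseteq C'$ and $\pi_1^{-1}(C') \subseteq A$, the latter being a union of full $\pi_1$-fibres; it suffices to bound the codimension of $C'$.

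The crux is a dichotomy for the inclusion $\pi_1^{-1}(C') \subseteq A$. If it is strict, then since $A$ is irreducible we get $\dim \pi_1^{-1}(C') \le \dim A - 1 \le \dim \Phi - \epsilon - 1$; as $\pi_1^{-1}(C')$ is a $\PP^{n-m}$-bundle over $C'$, this reads $\dim C' + (n-m) \le \dim \gG(m-1,n) + (n-m) - \epsilon - 1$, i.e. $\codim C' \ge \epsilon + 1$, which gives the claim since $C \subseteq C'$. So the real content is to rule out equality $\pi_1^{-1}(C') = A$, and I expect this to be the main obstacle and the point where irreducibility and $\epsilon \ge 1$ are genuinely used. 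If equality held, then $A$ would be simultaneously a union of full $\pi_1$-fibres (over $C'$) and of full $\pi_2$-fibres (over $B$); comparing the two descriptions shows that for every flag $(c,b) \in \Phi$ one has $b \in B \iff c \in C'$. Fixing any $b_\ast \in B$ (which is nonempty, else $C$ would be empty), every $c \subset b_\ast$ lies in $C'$, and then every $m$-plane through such a $c$ lies in $B$; thus $B$ contains every $m$-plane meeting $b_\ast$ in an $(m-1)$-plane.

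Finally I would close the argument by a connectivity observation: calling two $m$-planes adjacent when they meet in an $(m-1)$-plane, the previous step shows $B$ is closed under adjacency, and any two $m$-planes in $\PP^n$ can be joined by a finite chain of adjacent ones (replace one spanning vector at a time, keeping an $(m-1)$-plane fixed at each step). Hence $B = \gG(m,n)$, contradicting $\codim B \ge \epsilon \ge 1$. This rules out equality and completes the proof. The only steps needing care are the flatness and openness of the two bundle projections, used both to see that $C'$ is closed and to compute the relevant dimensions, together with the elementary connectivity of the adjacency graph on $\gG(m,n)$; both are routine.
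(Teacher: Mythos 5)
Your argument is correct and complete. Note that the paper does not actually prove Proposition \ref{GrassProp}; it quotes it from \cite{RiedlYang}, so there is no in-paper proof to compare against. Your route through the flag variety $\Phi$ is the natural one (and is essentially the argument given in \cite{RiedlYang}): the key insight, which a naive dimension count through the incidence correspondence misses (that only yields $\codim C \geq \epsilon$), is the dichotomy on whether $\pi_1^{-1}(C') \subsetneq A$ or $\pi_1^{-1}(C') = A$, with the equality case excluded by showing $B$ would be closed under the adjacency relation and hence, by connectivity of the adjacency graph, all of $\gG(m,n)$. The only points worth tightening are cosmetic: you should say that $B$ may be replaced by its closure (so that $A$ is closed, as your openness argument for $C'$ requires), and that the dimension count in the strict-inclusion case should be run on a top-dimensional component of $C'$ since $C'$ need not be irreducible. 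Neither affects the validity of the proof.
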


\begin{proof} (Theorem \ref{hyperbolicityThm}) 
Let $(p,X) \in R_{n-c,\overline{d}}$ be a general point on one of the components.  We wish to show that the codimension of any component of $R_{n-c,\overline{d}}$ passing through $(p,X)$ is at least $c$.  In order to show this, we will find a subvariety $\FF \subset \UU_{n,\overline{d}}$ such that $\codim(\FF \cap R_{n-c,\overline{d}} \subset \FF) \geq c$.

Let $(p,Y) \in \UU_{n,\overline{d}}$ be very general, so that there are no rational curves in $Y$ through $p$.  Let $(p,Z) \in \UU_{M,\overline{d}}$ be a pair such that $(p,Y)$ and $(p,X)$ are both parameterized linear sections of $(p,Z)$, where $M$ is some large number .  Let $\FF_m$ be the closure of the space of parameterized linear sections of $(p,Z)$ in $\UU_{m,d}$.  By hypothesis, 
\[ \codim (\FF_n \cap R_{n,\overline{d}} \subset \FF_n) \geq 1, \] 
so it follows by Proposition \ref{GrassProp} that 
\[ \codim (\FF_{n-c} \cap R_{n-c,\overline{d}} \subset \FF_n) \geq c+1 \]
which concludes the proof.
\end{proof}

\bibliographystyle{plain}

\end{document}